\newtheorem{theorem}{Theorem}[section]
\newtheorem{proposition}[theorem]{Proposition}
\theoremstyle{definition}
\newtheorem{definition}[theorem]{Definition}
\theoremstyle{remark}
\numberwithin{equation}{section}
\begin{document}

\title{A   class of  integral  graphs constructed from the hypercube}
\author{ S.  Morteza Mirafzal }%

\address{ Department of Mathematics, Lorestan University, Khoramabad, Iran}%
\email{smortezamirafzal@yahoo.com}
\email{mirafzal.m@lu.ac.ir}%

\begin{abstract}In this paper,  we determine the set of all distinct eigenvalues  of the line graph which is  induced by the  first and second layers of the  hypercube $ Q_n $, $n>3$. We show that this graph has precisely five distinct eigenvalues and  all of its eigenvalues are integers.  \\

 Keywords : Line graph, Equitable partition,  Integral graph\

\

AMS subject classifications. 05C25, 05C69, 94C15

\end{abstract}

\maketitle

\section{ Introduction}
\noindent
 In this paper, a graph $\Gamma=(V,E)$ is
considered as an undirected simple graph where $V=V(\Gamma)$ is the vertex-set
and $E=E(\Gamma)$ is the edge-set. For all the terminology and notation
not defined here, we follow [1,4,5].

Let $ n \geq 1 $ be an integer.  The hypercube  $Q_n$ is the graph whose vertex set is $ \{0,1  \}^n $, where two $n$-tuples  are adjacent if  they differ in precisely one coordinates. In the graph $Q_n$, the layer $L_i$ is the set of vertices which contain $i$ 1s, namely,  vertices of weight $i$, $ 1 \leq i \leq n$.  We denote by $ {Q_n}(i,i+1)$, the subgraph of $Q_n$ induced by layers $L_i$ and $ L_{i+1} $.
In this paper, we want to determine the set of all distinct eigenvalues of line graph of the graph $ {Q_n}(1,2) $. \

 We can consider the graph $Q_n$ from another point of view. The Boolean lattice $BL_n, n \geq 1$, is the graph whose vertex set is the set of all subsets of $[n]= \{ 1,2,...,n \}$, where two subsets $x$ and $y$ are adjacent if their symmetric difference has precisely one element.  In the graph $BL_n$, the layer $L_i$ is the set of $i-$subsets of $[n]$.  We denote by $ {BL_n}(i,i+1)$, the subgraph of $BL_n$ induced by layers $L_i$ and $ L_{i+1} $. It is an easy task to show that the graph $Q_n$ is isomorphic with the graph $BL_n$, by an isomorphism that induces an isomorphism from   $ {Q_n}(i,i+1)$ to  ${BL_n}(i,i+1)$.    \

In the sequel, we denote the graph ${BL_n}(1,2)$ by $H(n)$.  Therefore,    the graph $ H(n)$ is a
 graph with vertex set \

\

$V=\{v | v \subset [n],  | v |  \in \{ 1,2  \} \} $ and the
edge set \

\

$ E= \{ \{ v , w \} | v , w \in V , v \subset w $ or $ w \subset v \} $. \

\

It is clear that $ H(n) $ is a bipartite graph
with cells $V_1,V_2$ where\

 $V_1=  \{ v| v\subset [n], |v|=1 \}$ and $ V_2 = \{   v| v\subset [n], |v|=2 \}$. \

 Also, if $v\in V_1$, then $ deg(v)=n-1 $ whereas if $v \in V_2$, then
$deg(v) =2$, hence $H(n)$ is not a regular graph. Now, it is obvious that $H(n)$ has $n(n-1)$ edges.
The  following figure shows  $ H(5)$    in the plane.\

\

\definecolor{qqqqff}{rgb}{0.,0.,1.}
\begin{tikzpicture}[line cap=round,line join=round,>=triangle 45,x=.9cm,y=.8cm]
\clip(-4.56,-0.04) rectangle (8.9,4.78);
\draw (-2.,4.)-- (-4.32,1.98);
\draw (-2.,4.)-- (-2.7,1.98);
\draw (-2.,4.)-- (-1.22,2.);
\draw (-2.,4.)-- (0.24,2.);
\draw (0.,4.)-- (-4.32,1.98);
\draw (0.,4.)-- (1.56,2.);
\draw (0.,4.)-- (3.18,2.02);
\draw (0.,4.)-- (4.56,1.96);
\draw (2.,4.)-- (-2.7,1.98);
\draw (2.,4.)-- (1.56,2.);
\draw (2.,4.)-- (6.,2.);
\draw (2.,4.)-- (7.38,1.98);
\draw (4.,4.)-- (-1.22,2.);
\draw (4.,4.)-- (3.18,2.02);
\draw (4.,4.)-- (6.,2.);
\draw (4.,4.)-- (8.54,1.98);
\draw (6.,4.)-- (0.24,2.);
\draw (6.,4.)-- (4.56,1.96);
\draw (6.,4.)-- (7.38,1.98);
\draw (6.,4.)-- (8.54,1.98);
\draw (1.9,1.26) node[anchor=north west] {H(5)};
\begin{scriptsize}
\draw [fill=qqqqff] (-2.,4.) circle (1.5pt);
\draw[color=qqqqff] (-1.84,4.48) node {$1$};
\draw [fill=qqqqff] (0.,4.) circle (1.5pt);
\draw[color=qqqqff] (0.18,4.4) node {$2$};
\draw [fill=qqqqff] (2.,4.) circle (1.5pt);
\draw[color=qqqqff] (2.14,4.4) node {$3$};
\draw [fill=qqqqff] (4.,4.) circle (1.5pt);
\draw[color=qqqqff] (4.18,4.42) node {$4$};
\draw [fill=qqqqff] (6.,4.) circle (1.5pt);
\draw[color=qqqqff] (6.2,4.42) node {$5$};
\draw [fill=qqqqff] (-4.32,1.98) circle (1.5pt);
\draw[color=qqqqff] (-4.34,1.56) node {$12$};
\draw [fill=qqqqff] (-2.7,1.98) circle (1.5pt);
\draw[color=qqqqff] (-2.7,1.64) node {$13$};
\draw [fill=qqqqff] (-1.22,2.) circle (1.5pt);
\draw[color=qqqqff] (-1.28,1.66) node {$14$};
\draw [fill=qqqqff] (0.24,2.) circle (1.5pt);
\draw[color=qqqqff] (0.22,1.68) node {$15$};
\draw [fill=qqqqff] (1.56,2.) circle (1.5pt);
\draw[color=qqqqff] (1.56,1.66) node {$23$};
\draw [fill=qqqqff] (3.18,2.02) circle (1.5pt);
\draw[color=qqqqff] (3.18,1.72) node {$24$};
\draw [fill=qqqqff] (4.56,1.96) circle (1.5pt);
\draw[color=qqqqff] (4.58,1.66) node {$25$};
\draw [fill=qqqqff] (6.,2.) circle (1.5pt);
\draw[color=qqqqff] (5.96,1.66) node {$34$};
\draw [fill=qqqqff] (7.38,1.98) circle (1.5pt);
\draw[color=qqqqff] (7.32,1.64) node {$35$};
\draw [fill=qqqqff] (8.54,1.98) circle (1.5pt);
\draw[color=qqqqff] (8.54,1.66) node {$45$};
\end{scriptsize}
\end{tikzpicture} \

     We can see, by an easy argument
that the graph $H(n)$ is
connected and its diameter is 4. \

 We now consider the line graph of $H(n)$. We denote by $L(n)$ the line graph of the graph $H(n)$. Then each vertex of $L(n)$ is of the form
$ \{v,w\} = \{ \{ i \},  \{ i,j \}\}$, where $i,j \in [n]$, $i \neq j$, and two vertices $ \{v,w\}, \{ u,s  \}$ are adjacent  whenever the intersection of them is a set of order 1. In the sequel,  we denote the vertex  $\{ \{ i \},  \{ i,j \} \}$ by $[i, ij]$. Thus, if $[i,ij]  $ is a vertex of $ L(n) $, then \

\

$  N([i,ij])  = \{ [i,ik] | j,i \neq k \in [n] \} \cup \{ [j,ij] \}$. \

\

Hence,  $  deg ([i,ij]) = n-1$, in other words,  $L(n)$ is a regular graph of valency $n-1$.   By an easy argument, we can show that the graph $L(n)$ is a connected graph with diameter 3. Also, its girth is 3 and hence it is not a bipartite graph.  The following figure displays $L(4)$ in the plane.
 Note that in the following figure the vertex $ [i,ij] $ is denoted by,  $i,ij$. \

\definecolor{qqqqff}{rgb}{0.,0.,1.}
\begin{tikzpicture}[line cap=round,line join=round,>=triangle 45,x=1.6cm,y=1.0cm]
\clip(-0.7,-1.) rectangle (7.12,5.5);
\draw (-0.02,0.08)-- (3.08,2.);
\draw (6.56,-0.02)-- (3.08,2.);
\draw (-0.02,0.08)-- (6.56,-0.02);
\draw (-0.12,5.2)-- (2.98,3.3);
\draw (2.98,3.3)-- (6.34,5.18);
\draw (6.34,5.18)-- (-0.12,5.2);
\draw (0.98,3.08)-- (2.12,2.6);
\draw (2.12,2.6)-- (1.,1.8);
\draw (0.98,3.08)-- (1.,1.8);
\draw (4.28,2.62)-- (5.52,3.12);
\draw (4.28,2.62)-- (5.52,2.02);
\draw (5.52,2.02)-- (5.52,3.12);
\draw (-0.6,4.9) node[anchor=north west] {1,12};
\draw (6.48,4.92) node[anchor=north west] {1,13};
\draw (2.72,4.08) node[anchor=north west] {1,14};
\draw (1.04,3.7) node[anchor=north west] {2,12};
\draw (0.02,2.) node[anchor=north west] {2,23};
\draw (2.12,2.4) node[anchor=north west] {2,24};
\draw (3.76,2.34) node[anchor=north west] {4,24};
\draw (5.72,3.26) node[anchor=north west] {4,41};
\draw (5.78,2.02) node[anchor=north west] {4,43};
\draw (6.56,-0.16) node[anchor=north west] {3,34};
\draw (-0.06,-0.22) node[anchor=north west] {3,23};
\draw (2.9,1.62) node[anchor=north west] {3,31};
\draw (2.12,2.6)-- (4.28,2.62);
\draw (5.52,3.12)-- (2.98,3.3);
\draw (-0.12,5.2)-- (0.98,3.08);
\draw (5.52,2.02)-- (6.56,-0.02);
\draw (1.,1.8)-- (-0.02,0.08);
\draw (6.34,5.18)-- (3.08,2.);
\draw (2.78,-0.2) node[anchor=north west] {L(4)};
\begin{scriptsize}
\draw [fill=qqqqff] (2.98,3.3) circle (1.5pt);
\draw [fill=qqqqff] (-0.12,5.2) circle (1.5pt);
\draw [fill=qqqqff] (6.34,5.18) circle (1.5pt);
\draw [fill=qqqqff] (3.08,2.) circle (1.5pt);
\draw [fill=qqqqff] (-0.02,0.08) circle (1.5pt);
\draw [fill=qqqqff] (6.56,-0.02) circle (1.5pt);
\draw [fill=qqqqff] (2.12,2.6) circle (1.5pt);
\draw [fill=qqqqff] (0.98,3.08) circle (1.5pt);
\draw [fill=qqqqff] (1.,1.8) circle (1.5pt);
\draw [fill=qqqqff] (4.28,2.62) circle (1.5pt);
\draw [fill=qqqqff] (5.52,3.12) circle (1.5pt);
\draw [fill=qqqqff] (5.52,2.02) circle (1.5pt);
\end{scriptsize}
\end{tikzpicture}

The graph $L(n)$ has various interesting properties, amongst
of them, we interested in  its spectrum.
In the present paper, we show that $L(n)$ has precisely  5
distinct eigenvalues. Also, we show that each eigenvalue of $L(n)$ is an integer.

\section{ Preliminaries }

Let $ \Gamma  $ be a graph with vertex set $  V= \{ v_1,v_2,...,v_n \} $ and
edge set $ E( \Gamma ) $. The adjacency matrix $A = A(\Gamma) = [a_{ij} ]$ of $ \Gamma  $ is an $n \times n$ symmetric
matrix of $0^,$s and $1^,$s with $ a_{ij} = 1$ if and only if $v_i $ and $v_j $ are adjacent. The
characteristic polynomial of $ \Gamma  $ is the polynomial $P(G) = P(G, x) = det(xI_n - A)$,
where $I_n $ denotes the $n \times n$   identity matrix. The spectrum
of $A(\Gamma)$ is also called the spectrum of $\Gamma  $. If the eigenvalues of $\Gamma$ are ordered by
$ \lambda_1 > \lambda_2 > ... > \lambda_r  $, and their multiplicities are $ m_1, m_2,...,m_r $,  respectively,
 then we write \

\

 \centerline{ $   Spec( \Gamma)$    =  ${ \lambda_1,\lambda_2,..., \lambda_r } \choose{ m_1, m_2,...,m_r }  $
 or $ Spec( \Gamma)$    = $ \{  \lambda_1^{m_1},  \lambda_2^{m_2},..., \lambda_r^{m_r}   \} $} \

If all the eigenvalues of the adjacency matrix
 of the graph $\Gamma $ are integers, then we say that $\Gamma $ is an integral graph.
The notion of integral graphs was first introduced by F. Harary and A.J. Schwenk
in 1974 (see [6]).  In 1976 Bussemaker and Cvetkovic [3], proved that there are exactly 13 connected
cubic integral graphs. In general, the problem of characterizing
integral graphs seems to be very difficult. There are good surveys in this area ( for example [2] ).\

 Let $X$ be a graph with vertex set $V$. We have the following definitions and facts [5].

\begin{definition}
A partition $\pi$ of $V$ with cells $C_1, . . . , C_r$ is equitable if the number
of neighbours in $C_j$
of a vertex $v \in C_i$  is a  constant and  depends only on the choice of $C_i$
and $C_j$.
In
this case, we denote the number of neighbors in $C_j$
of any vertex in $C_i$  by $p_{ij}$.

\end{definition}
It is clear that if $\pi$ is  an equitable partition with cells $C_1, . . . , C_r$
then every
vertex in $C_i$ has the same valency.
\begin{definition}
Let $X$  be a graph. If $H \leq Aut(X)$ is a group of automorphisms of $X$, then $ H $ partition the vertex set of $ X$ into orbits. The partition of $X$ consisting of the set of orbits which are  constructed by $H$,
is called an orbit partition of $X$.

\end{definition}

\begin{definition}
Let $X$ be  a graph with equitable partition $ \pi = \{ C_1,...,C_r \} $. The directed graph with vertex set $\pi $ with $ b_{ij} $ arcs from $C_i$ to $C_j$ is called the quotient of $X$
over $\pi$ and is denoted by $ X / \pi $.
\end{definition}

Therefore, if $ P=(p_{ij}) $ is the adjacency matrix of the directed  graph $ X / \pi $, then $p_{ij}$ is the number
of neighbours in $C_j$ of any vertex in $C_i$
\begin{theorem}
Let $X$ be a graph with equitable partition $\pi$. Let $P$
be the adjacency matrix of the directed graph $ X/\pi $
and $A$ be the adjacency  matrix of $X$. Then each eigenvalue of the matrix $P$ is an eigenvalue of the matrix $A$.
\end{theorem}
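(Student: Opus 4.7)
The plan is to introduce the characteristic matrix of the partition and then reduce the claim to a single matrix identity. Let $\pi = \{C_1,\dots,C_r\}$ be the equitable partition of $V = V(X)$, and let $S$ be the $n \times r$ matrix (the \emph{characteristic matrix} of $\pi$) whose $j$-th column is the $0/1$ indicator vector of $C_j$. Since the cells $C_j$ are disjoint and nonempty, the columns of $S$ are linearly independent, so $Sv \neq 0$ for every nonzero $v \in \mathbb{C}^r$. This observation will be what converts an eigenvector of $P$ into a nonzero eigenvector of $A$.

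The central step is to verify the identity $AS = SP$. For a vertex $v \in C_i$, the $(v,j)$ entry of $AS$ counts the number of neighbours of $v$ lying in $C_j$; by the definition of equitability this equals $p_{ij}$, independently of the representative $v \in C_i$. The $(v,j)$ entry of $SP$ is $\sum_{k} S_{vk} P_{kj}$, and because $v$ belongs to exactly one cell, namely $C_i$, this sum collapses to $P_{ij} = p_{ij}$. Hence the two matrices agree entrywise, and $AS = SP$. This equality is really the only content of Definition 2.1 that needs to be unpacked, and I would present it as the heart of the argument.

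Once the identity is in place, the conclusion is immediate. Suppose $\lambda$ is an eigenvalue of $P$ with eigenvector $v \in \mathbb{C}^r \setminus \{0\}$. Then
\[
A(Sv) = (AS)v = (SP)v = S(Pv) = \lambda\, Sv.
\]
Because $S$ has linearly independent columns, $Sv$ is a nonzero vector in $\mathbb{C}^n$, so it is an eigenvector of $A$ with eigenvalue $\lambda$. Since $A$ is a real symmetric matrix, $\lambda$ is automatically real, and the theorem follows.

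I do not expect any genuine obstacle: the proof is essentially a bookkeeping verification of $AS = SP$, followed by a one-line lift from eigenvectors of $P$ to eigenvectors of $A$ via multiplication by $S$. The only point worth stating carefully is the injectivity of the map $v \mapsto Sv$, which is needed to guarantee that the lifted eigenvector is nonzero and which is what prevents spurious eigenvalues from being produced.
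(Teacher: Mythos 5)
Your proof is correct: the identity $AS=SP$ for the characteristic matrix $S$ of the partition, together with the injectivity of $v\mapsto Sv$, is exactly the standard argument, and you have verified both steps properly. Note that the paper itself gives no proof of this theorem — it is quoted as a known fact from Godsil and Royle [5] — and your argument is essentially the one found in that reference, so there is nothing to reconcile.
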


\begin{theorem}
Let $X$  be a vertex-transitive  graph and $H \leq Aut(X)$ is a group of automorphisms of $X$. If $\pi$, the orbit partition of $H$,  has a singleton cell $ \{ u \}$, then every eigenvalue of $X$ is an eigenvalue of $X/ \pi$.
\end{theorem}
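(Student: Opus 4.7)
The plan is to show that every eigenvalue $\lambda$ of $A(X)$ is realised by some eigenvector that is constant on the cells of $\pi$; such a function then descends to an eigenvector of the quotient matrix $P$ for the same $\lambda$.

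First I would check that $\pi$ is equitable. This is automatic for an orbit partition of a group of automorphisms: if $v,v'$ lie in the same orbit $C_i$ and $h \in H$ satisfies $h(v)=v'$, then $h$ bijects $N(v) \cap C_j$ with $N(v') \cap C_j$, so the count $p_{ij}$ does not depend on the chosen representative. Hence Definition~2.3 yields a well-defined quotient matrix $P$, and Theorem~2.4 is available.

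The core construction is symmetrisation over $H$. For $h \in H$ and a function $f$ on $V(X)$, put $(h\cdot f)(v)=f(h^{-1}v)$. Since $h$ is an automorphism, its permutation matrix commutes with $A = A(X)$, so $Af = \lambda f$ implies $A(h \cdot f) = \lambda (h \cdot f)$. Define
\[
\bar f \;=\; \frac{1}{|H|} \sum_{h \in H} h \cdot f .
\]
Then $\bar f$ is $H$-invariant, hence constant on every cell of $\pi$, and still satisfies $A\bar f = \lambda \bar f$. Any function on $V(X)$ constant on cells collapses to a vector $\tilde f$ on $\pi$, and equitability of $\pi$ encodes precisely that $(A\bar f)(v) = (P\tilde f)(C_i)$ whenever $v \in C_i$. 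Thus $P\tilde f = \lambda \tilde f$, provided $\tilde f \neq 0$.

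The only delicate point is forcing $\bar f$, and hence $\tilde f$, to be nonzero for some $f$ in the $\lambda$-eigenspace $V_\lambda$. Here the two hypotheses combine. Because $\{u\}$ is a singleton orbit, every $h \in H$ fixes $u$, so $\bar f(u) = f(u)$ for each $f \in V_\lambda$; it therefore suffices to find $f \in V_\lambda$ with $f(u) \neq 0$. Using vertex-transitivity, the orthogonal projector $E_\lambda$ onto $V_\lambda$ is a polynomial in $A$ and so commutes with every automorphism of $X$; since $Aut(X)$ acts transitively on $V(X)$, the diagonal entries of $E_\lambda$ are all equal, and the trace of $E_\lambda$ equals $\dim V_\lambda \geq 1$, giving $(E_\lambda)_{u,u} > 0$. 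Expanding $(E_\lambda)_{u,u} = \sum_i |\phi_i(u)|^2$ in an orthonormal basis $\{\phi_i\}$ of $V_\lambda$ then forces $\phi_i(u) \neq 0$ for some $i$. The hardest step is precisely this non-vanishing input; the remainder is routine bookkeeping with the equitable partition.
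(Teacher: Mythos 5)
The paper states this result only as a quoted fact from [5] (Godsil--Royle) and supplies no proof of its own, so there is no internal argument to compare against. Your proof is correct and complete: averaging a $\lambda$-eigenvector over $H$ preserves both the eigenvalue equation and the value at the fixed vertex $u$; the projector/trace argument via vertex-transitivity supplies a $\lambda$-eigenvector with $f(u)\neq 0$; and the symmetrised vector, being constant on cells and nonzero on the singleton cell, descends to a nonzero $\lambda$-eigenvector of the quotient matrix $P$. This is essentially the standard proof of the cited result.
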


\begin{proposition}
Every orbit partition is an equitable partition.

\end{proposition}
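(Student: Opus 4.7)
The plan is to unpack the definitions and verify the equitable condition directly, using only the fact that the cells of an orbit partition are, by construction, setwise invariant under the group action.

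First I would fix the setup: let $H \leq \mathrm{Aut}(X)$ and let $\pi = \{O_1, \ldots, O_r\}$ be the partition of $V(X)$ into $H$-orbits. I want to show that for any cells $O_i, O_j$ and any vertex $v \in O_i$, the quantity $|N(v) \cap O_j|$ depends only on $i$ and $j$, not on the particular choice of $v$. So I would pick two arbitrary vertices $u, v \in O_i$ and aim to prove $|N(u) \cap O_j| = |N(v) \cap O_j|$.

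The key step is the transport. Since $u$ and $v$ lie in the same $H$-orbit, there exists $h \in H$ with $h(u) = v$. Because $h$ is a graph automorphism, it restricts to a bijection $N(u) \to N(v)$. Moreover, for every $w \in V(X)$, the vertex $h(w)$ lies in the same $H$-orbit as $w$ (indeed $w$ and $h(w)$ are connected by the element $h \in H$), so $h$ maps each orbit $O_j$ onto itself. Combining these two facts, $h$ restricts to a bijection between $N(u) \cap O_j$ and $N(v) \cap O_j$, which forces the two sets to have the same cardinality. This gives exactly the equitable condition, with $p_{ij}$ well-defined as the common value $|N(v) \cap O_j|$ for any $v \in O_i$.

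There is no real obstacle here; the proof is essentially a one-liner once one observes that orbits are $H$-invariant and that automorphisms preserve neighborhoods. The only thing to be careful about is not to confuse the partition of $V(X)$ into orbits (which is always equitable, as shown) with the partition of the edge set, and to note that this argument does not require $H$ to act transitively on $V(X)$, nor does it require any further structure on the graph.
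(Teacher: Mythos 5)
Your proof is correct and follows essentially the same route as the paper's: transport one vertex of a cell to another by an element $h\in H$, use that $h$ preserves adjacency and fixes each orbit setwise, and conclude that $h$ gives a bijection $N(u)\cap O_j\to N(v)\cap O_j$. No substantive difference.
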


\begin{proof}
Let $X$ be a graph and $H$ be a subgroup of the automorphism group of $X$. Let $v,w \in V=V(X)$ and $O_1= H(v), \ O_2= H(w)$  are    orbits  of $H$. Let $v$ has $r$ neighbours  in $O_2$, and $u$ has $s$ neighbours in $O_2$.   Let $ N(v) \cap O_2$
=$  \{ w_1,...,w_r  \}  $. Since $u$ is a vertex in $O_1$, then there is some $h \in H$ such that $u=h(v)$, and thus we have; \

\centerline{ $ N(u) \cap O_2 =N( h(v) ) \cap O_2 = h( N(v) ) \cap O_2=h( \{w_1,...,w_r  \}) \cap O_2 $ = }
 \centerline{$ \{ h(w_1),...,h(w_r) \} \cap O_2  = \{ h(w_1),...,h(w_r) \} $} \
Note that since $w_i \in H(w) =O_2, 1 \leq i \leq r$, then $ h(w_i) \in hH(w) = H(w)=O_2$, because $ H$ is a group. Now,
Our argument shows that $ r =s $.

\end{proof}

\section{Main results}\
\
\begin{proposition}
The  graph $L(n)$ is a vertex-transitive graph.

\end{proposition}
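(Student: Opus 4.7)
The plan is to exhibit a transitive subgroup of $\mathrm{Aut}(L(n))$, for which the natural candidate is the symmetric group $S_n$ acting on $[n]$. Each $\sigma \in S_n$ extends in the obvious way to an action on subsets of $[n]$, and I would define its action on a vertex of $L(n)$ by
\[
\sigma\cdot [i,ij] \;=\; [\sigma(i),\,\sigma(i)\sigma(j)].
\]
This is well-defined because the vertex $[i,ij]$ encodes the edge $\{\{i\},\{i,j\}\}$ of $H(n)$, and $\sigma$ sends $\{i\}$ to $\{\sigma(i)\}$ and $\{i,j\}$ to $\{\sigma(i),\sigma(j)\}$, which is again an edge of $H(n)$ (the containment $\{i\}\subset\{i,j\}$ is preserved).

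Next I would check that this action preserves adjacency in $L(n)$. By the description in the introduction, $[i,ij]$ and $[u,us]$ are adjacent if and only if the two edges of $H(n)$ that they represent share exactly one vertex. Since $\sigma$ is a bijection on $V(H(n))$ it preserves cardinalities of intersections of subsets of $V(H(n))$, so adjacency is preserved. Hence the assignment $\sigma \mapsto (\,[i,ij]\mapsto [\sigma(i),\sigma(i)\sigma(j)]\,)$ is a homomorphism $S_n \to \mathrm{Aut}(L(n))$.

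Finally, to establish transitivity, I would take two arbitrary vertices $[i,ij]$ and $[k,kl]$, with $i\ne j$ and $k\ne l$, and choose any $\sigma \in S_n$ satisfying $\sigma(i)=k$ and $\sigma(j)=l$; such a permutation certainly exists because we are matching one pair of distinct elements of $[n]$ to another, and $n \geq 4$. Then directly from the definition $\sigma \cdot [i,ij] = [k,kl]$, which is exactly the required transitivity.

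There is no substantive obstacle in this argument. The one point to be careful about is that, within the label $[i,ij]$, the singleton coordinate $i$ is distinguished from $j$, so the matching must be of the ordered pairs $(i,j)\mapsto(k,l)$, not merely of the $2$-sets $\{i,j\}\mapsto\{k,l\}$; the $S_n$-action accommodates this automatically, so no further work is needed.
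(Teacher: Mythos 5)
Your proof is correct and follows essentially the same route as the paper: realize $Sym([n])$ as a group of automorphisms of $L(n)$ via $[i,ij]\mapsto[\sigma(i),\sigma(i)\sigma(j)]$ and then send one vertex to another by a suitable permutation. In fact your choice of ``any $\sigma$ with $\sigma(i)=k$, $\sigma(j)=l$'' is slightly safer than the paper's explicit $\theta=(i,l)(j,k)$, which misbehaves in degenerate cases such as $k=i$, $l=j$ (where it reduces to the identity yet the target vertex $[j,ij]$ differs from $[i,ij]$).
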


\begin{proof}
We let $ \Gamma = L(n)$.  Note that the  group $ Sym([n]) $ is a subgroup of the group $ Aut(\Gamma ) $. If $ \theta \in  Sym([n]) $, then $ f_{ \theta } $ defined by the rule $ f_{ \theta }([i,ij]) =[\theta (i),  \theta (i) \theta (j)  ] $ is an automorphism of the graph $ \Gamma $. In fact, if $G= \{ f_{ \theta } \ | \  \theta \in Sym(n) \}$, then $G$  is isomorphic with $ Sym([n])  $ and  $ G \subseteq Aut(\Gamma)$.
 Let $ [i,ij] $ and $ [l,lk] $ are arbitrary vertices of $L(n)$. Then for $ \theta = (i,l)(j,k) \in Sym([n]) $, we have $  f_{ \theta }([i,ij]) =[\theta (i),  \theta (i) \theta (j) =[l,lk]     $
\end{proof}

Let $\Gamma = L(n)$. We can determine some of the eigenvalues of $\Gamma$. For example since $\Gamma$ is $(n-1)$-regular, then $n-1$ is the largest eigenvalue of $\Gamma$ [1, chap 3]. On the other hand, since $ \Gamma $ is a line graph, then for each eigenvalue $e$ of $\Gamma$ we have $-2 \leq e$ [5, chap 8]. We show that -2 is an eigenvalue of $ \Gamma$.

\begin{proposition}
Let $\Gamma = L(n)$. Then $ -2 $ is an eigenvalue of $\Gamma$.
\end{proposition}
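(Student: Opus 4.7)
The plan is to exploit the classical incidence-matrix identity
\[
B^{T}B \;=\; A(L(H(n)))+2I,
\]
where $B$ denotes the vertex-edge incidence matrix of $H(n)$ (an $|V(H(n))|\times|E(H(n))|$ matrix of $0$'s and $1$'s). Since $L(n)=L(H(n))$, the scalar $-2$ is an eigenvalue of $L(n)$ precisely when $B^{T}B$ is singular, which in turn is equivalent to $\ker B\neq 0$.

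I would then establish $\ker B\neq 0$ by a trivial dimension count. As recorded in the introduction, $H(n)$ has $n+\binom{n}{2}=\tfrac{n(n+1)}{2}$ vertices and $n(n-1)$ edges, and the inequality
\[
n(n-1)\;>\;\tfrac{n(n+1)}{2}
\]
is equivalent to $n>3$, which is the standing hypothesis. Hence $B$ has strictly more columns than rows, so
\[
\dim\ker B\;\geq\;n(n-1)-\tfrac{n(n+1)}{2}\;=\;\tfrac{n(n-3)}{2}\;>\;0,
\]
and $-2$ is an eigenvalue of $L(n)$.

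If an explicit eigenvector is preferable, one can exhibit one directly. Since $n\geq 4$, the graph $H(n)$ contains the $6$-cycle
\[
\{1\}\,-\,\{1,2\}\,-\,\{2\}\,-\,\{2,3\}\,-\,\{3\}\,-\,\{1,3\}\,-\,\{1\},
\]
and assigning the alternating weights $+1,-1,+1,-1,+1,-1$ to its six consecutive edges (viewed as vertices of $L(n)$) and $0$ to every other edge of $H(n)$ produces a vector $x$ with $Bx=0$: at each of the six cycle vertices, the two nonzero incident weights cancel and all other incident edges carry weight $0$. Via the identity displayed above, $Bx=0$ translates into $A(L(n))x=-2x$.

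The main (and essentially only) obstacle is the incidence-matrix identity $B^{T}B=A(L(G))+2I$; but this is precisely the classical ingredient the author has already invoked to conclude $-2\leq e$ for every eigenvalue $e$ of $L(n)$, so it is available at no additional cost.
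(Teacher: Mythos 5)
Your proof is correct, but it takes a genuinely different route from the paper. The paper exhibits the induced $6$-cycle on the vertices $[1,12],[1,13],[3,13],[3,32],[2,32],[2,21]$ of $L(n)$, notes that a $6$-cycle has minimum eigenvalue $-2$, invokes eigenvalue interlacing for induced subgraphs to get $\theta_{\min}\le -2$, and combines this with the line-graph bound $\theta_{\min}\ge -2$ to conclude $\theta_{\min}=-2$. You instead work with the incidence matrix $B$ of $H(n)$ and the identity $B^{T}B=A(L(n))+2I$, reducing the claim to $\ker B\neq 0$, which follows from the count $n(n-1)>\tfrac{n(n+1)}{2}$ for $n>3$. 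Your argument avoids interlacing entirely, pinpoints exactly where the hypothesis $n>3$ enters, and as a bonus yields a lower bound $\tfrac{n(n-3)}{2}$ on the multiplicity of $-2$ (in fact, since $H(n)$ is connected and bipartite, $\operatorname{rank}B=|V(H(n))|-1$, so the multiplicity is exactly $\tfrac{n(n-3)}{2}+1$), information the paper's proof does not provide. Your alternative explicit-eigenvector variant, assigning alternating signs to the edges of the $6$-cycle $\{1\}-\{1,2\}-\{2\}-\{2,3\}-\{3\}-\{1,3\}-\{1\}$ in $H(n)$, is essentially the paper's $6$-cycle made concrete as a vector in $\ker B$, and has the advantage of producing an actual eigenvector for $-2$ rather than arguing indirectly through interlacing. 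Both approaches rest on the same classical fact about line graphs, so neither requires machinery beyond what the paper already cites.
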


\begin{proof}
Let $S$ be the subgraph of $ \Gamma$ induced by the vertex set; \

\

\centerline
{ $B= \{ [1,12], [1,13], [3,13], [3,32], [2,32], [2, 21] \}$}
\

It is clear that $S$ is a 6-cycle, and hence -2 is an eigenvalue of the subgraph $S$ [1, chap 3]. Let $ \theta_{min} $ be the minimum of  the eigenvalues of $\Gamma $. Therefore, by  [5, chap 8] we have $  \theta_{min} \leq -2 $. Now by what is stated above, we conclude that  $  \theta_{min} =-2 $.
\end{proof}

We now try to find  some other eigenvalues of the graph $L(n)$, by constructing  a suitable partition of the vertex set of this graph.

\begin{proposition}
Let $\Gamma = L(n)$. Then $ -1 $  and $ n-2 $ are eigenvalues of the graph  $\Gamma$.

\end{proposition}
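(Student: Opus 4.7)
The plan is to produce a small equitable partition of $V(L(n))$ and to exhibit $n-2$ and $-1$ as eigenvalues of its quotient matrix; Theorem 2.4 then delivers these numbers as eigenvalues of $L(n)$ itself.

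To obtain the partition I would appeal to Proposition 2.6. Consider the subgroup
$$H = \mathrm{Sym}(\{1,2\}) \times \mathrm{Sym}(\{3,4,\ldots,n\}) \leq \mathrm{Sym}([n]),$$
embedded in $\mathrm{Aut}(L(n))$ via the assignment $\theta \mapsto f_\theta$ constructed in Proposition 3.1. Its orbits on $V(L(n))$ are the four cells
\begin{align*}
C_1 &= \{[1,12],\ [2,12]\},\\
C_2 &= \{[a,ak]\ :\ a\in\{1,2\},\ 3\leq k\leq n\},\\
C_3 &= \{[k,ak]\ :\ a\in\{1,2\},\ 3\leq k\leq n\},\\
C_4 &= \{[k,k\ell]\ :\ 3\leq k\neq\ell\leq n\},
\end{align*}
and the hypothesis $n > 3$ ensures that $C_4$ is non-empty. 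By Proposition 2.6 this is an equitable partition.

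I would then pick one representative of each cell (for instance $[1,12],\ [1,13],\ [3,13],\ [3,34]$) and use the neighborhood formula $N([i,ij]) = \{[i,ik] : k \neq i,j\} \cup \{[j,ij]\}$ to count, in each case, how many neighbors land in each cell. This produces the quotient matrix
$$P = \begin{pmatrix} 1 & n-2 & 0 & 0 \\ 1 & n-3 & 1 & 0 \\ 0 & 1 & 1 & n-3 \\ 0 & 0 & 2 & n-3 \end{pmatrix},$$
whose row sums are all $n-1$ (a useful sanity check). A short direct computation then shows that $(n-2,\ n-3,\ -1,\ -2)^{\top}$ is an eigenvector of $P$ with eigenvalue $n-2$, while $(n-2,\ -2,\ n-2,\ -2)^{\top}$ is an eigenvector with eigenvalue $-1$. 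Theorem 2.4 immediately upgrades these to eigenvalues of $L(n)$.

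The only delicate step is the bookkeeping for the fourth row of $P$: starting from the representative $[3,34]\in C_4$ one must recognize that both neighbors $[3,13]$ and $[3,23]$ (arising by taking $k\in\{1,2\}$ in the neighborhood formula) land in $C_3$, while the remaining neighbors $[3,3\ell]$ with $\ell\geq 5$ together with $[4,34]$ all lie back in $C_4$. Once $P$ is set up correctly, verifying the two prescribed eigenvectors is entirely routine.
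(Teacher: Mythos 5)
Your proof is correct, and I have checked the details: the four cells really are the orbits of the setwise stabilizer of $\{1,2\}$ acting through the embedding $\theta \mapsto f_\theta$, the quotient matrix $P$ is exactly as you state (each row summing to $n-1$), and both claimed eigenvector identities $P(n-2,\,n-3,\,-1,\,-2)^{\top}=(n-2)(n-2,\,n-3,\,-1,\,-2)^{\top}$ and $P(n-2,\,-2,\,n-2,\,-2)^{\top}=-(n-2,\,-2,\,n-2,\,-2)^{\top}$ hold. The overall machinery is the same as the paper's (orbit partition of a stabilizer subgroup, quotient matrix, Theorem 2.4 via Proposition 2.6), but your decomposition is genuinely different: the paper uses the point stabilizer of $1$, whose three orbits $\{[1,1i]\}$, $\{[i,1i]\}$, $\{[i,ij]: i,j\geq 2\}$ give a $3\times 3$ quotient whose characteristic polynomial factors by hand as $(x-(n-2))(x-(n-1))(x+1)$, whereas your setwise stabilizer of $\{1,2\}$ gives four cells that are neither a refinement nor a coarsening of the paper's (your $C_1$ mixes the paper's $O_1$ and $O_2$). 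The paper's route is slightly more economical for this proposition; yours buys a little more, since the trace of your $P$ forces the fourth eigenvalue to be $0$ (spectrum $\{n-1,n-2,0,-1\}$), so your single partition already certifies one more of the five eigenvalues of $L(n)$ than the paper's three-cell partition does. Your explicit-eigenvector verification in place of a characteristic-polynomial computation is a legitimate and fully rigorous substitute.
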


\begin{proof}
Let $ H = \{  f_{ \alpha } \ | \  \alpha \in Sym([n]), \alpha(1) = 1 \} $.  Then $H$ is a subgroup of $ Aut( \Gamma)$, the automorphism group of $\Gamma $.  Let $ H_1 = \{ \alpha \ | \ f_{ \alpha \  \in H} \}   $.  Note that $H_1$ is a subgroup of $Sym([n])$ isomorphic with $Sym([n-1])$.  In the sequel, we try to find the orbit partition of  $H$.  The group $H$ has three orbits
in its action on the vertex set of $\Gamma$. In fact, we have the following orbits; \newline
$O_1 =H([1,12]) = \{   h([1,12]) \
| \  h \in \
 H \}$ = $ \{ [ \alpha(1), \alpha(1)\alpha(2) ] \
\ | \ \alpha \in H_1\} $=
$ \{  [1,1i] \ | \  i\in \  \{  2,3,...,n \}   \}$.
\newline
$O_2 =H([2,12]) = \{   h([2,12])\
| \  h \in \
 H \}$ = $ \{ [ \alpha(2), \alpha(1)\alpha(2) ] \
\ | \ \alpha \in H_1\} $=
$ \{  [i,1i] \ | \  i\in \  \{  2,3,...,n \}   \}$.
\newline
\

$O_3 =H([3,34]) = \{   h([3,34]) \
| \  h \in \
 H \}$ = $ \{ [ \alpha(3), \alpha(3)\alpha(4) ] \
\ | \ \alpha \in H_1\} $=
$ \{  [i,ij] \ | \  i,j\in \  \{  2,3,...,n \}, i \neq j   \}$. \newline
\

Note that since $ O_1 \cup O_2 \cup O_3 = V= V(\Gamma) $, the group $H$ has  not another orbit. If we let $ \pi = \{ O_1 , O_2, O_3 \} $, then for the matrix $   P = A(\Gamma / \pi)$=$( p_{i,j}) $, the adjacency matrix of the directed graph $\Gamma / \pi $, we have, \newline
\

$p_{11}=n-2$, since $[1,12] \in O_1$ is adjacent to $ n-2 $ other vertices in $O_1$.
\newline
$p_{12}=1$, since $[1,12] \in O_1$ is  adjacent  to 1 vertex in $ O_2 $, namely,  $ [2,12] $.
\newline
$p_{13}=0$, since $[1,12] \in O_1$ is  not adjacent  to any vertex in  $ O_3 $.
\newline
\

$p_{21}=1$, since $[2,12] \in O_2$ is  adjacent  to 1 vertex in  $ O_1 $, namely,  $ [1,12] $.
\newline
$p_{22}=0$, since $[2,12] \in O_2$ is not  adjacent  to any vertex in  $ O_2 $.
\newline
$p_{23}=n-2$, since $[2,12] \in O_2$ is  adjacent  to every vertex in  $ O_3 $ of the form $ [2,i2]$, $i \in \{ 3,...,n \} $.
\newline
\

$p_{31}=0$, since $[3,34] \in O_3$ is not  adjacent  to any vertex in  $ O_1 $.
\newline
$p_{32}=1$, since $[3,34] \in O_3$ is  adjacent  to 1 vertex in  $ O_2 $, namely,  $ [3,31] $.
\newline
$p_{33}=n-2$, since $[3,34] \in O_3$ is  adjacent  to every vertex in $ O_3 $ of the form $ [3,i3]$, $i \in \{ 4,...,n \}$ and the vertex  $[4,34]$.
 Note that since $deg([3,34])$ =$n-1$, this vertex is not adjacent to any other vertex in  $\Gamma$. \newline
\

In other words, we have
 $P$=$\begin{bmatrix}
n-2 & 1 & 0  \\ 1 & 0 & n-2 \\ 0 & 1 & n-2 \\
\end{bmatrix}$. \
Therefore, we have \newline
 $det(xI-P)$ = $(x-(n-2))$ det( $\begin{bmatrix}
x & -(n-2)   \\ -1 & x-(n-2) \\
\end{bmatrix}$) + det (  $\begin{bmatrix}
-1 & -(n-2)   \\  0 & x-(n-2) \\
\end{bmatrix}$)= \

$(x-(n-2)) ( x^2-(n-2)x-(n-2) ) -x + n-2 $= \

$ (x-( n-2 ))( x^2-(n-2)x-(n-2)-1 ) $ = \

 $ (x-( n-2 ))(( x-(n-1) )(x+1)$ \

 We now, can see that the eigenvalues of the matrix $ P$ are $ n-1, n-2$  and  -1. Since by Theorem 2.4.  and  Theorem 2.6. every eigenvalue of $P$ is an eigenvalue of the graph $\Gamma$, thus  $ -1$  and $ n-2 $ are also  eigenvalues of $\Gamma$.
\end{proof}

We now try to find an orbit partition $\pi$ such that this partition has a singleton cell. If we construct such a partition for the vertex set of $ L(n) $, then by Theorem 2.5.  and Theorem 2.6.  every eigenvalue of the graph $L(n)$ is an eigenvalue of the matrix $P$, the adjacency matrix of the directed graph $ L(n) / \pi $.

Let $ K = \{  f_{ \alpha } \  |  \  \alpha \in Sym([n]), \alpha(1) = 1,  \alpha(2) = 2 \} $.  Then $K$ is a subgroup of $ Aut( \Gamma)$, the automorphism group of $\Gamma $.  Let $ H_2 = \{ \alpha \ | \ f_{ \alpha \  \in H} \}   $.  Note that $H_2$ is a subgroup of $Sym([n])$ isomorphic with $Sym([n-2])$. In the sequel,  we want to determined the orbit partition of  the subgroup $ K $. In fact,  $K$ generates the following orbits; \

\

$O_1 =K([1,12]) = \{   k([1,12]) \
| \  k \in \
K \}$ = $ \{ [ \alpha(1), \alpha(1)\alpha(2) ] \
\ | \ \alpha \in H_2\} $= $[1,12]$. \

$O_2 =K([1,13]) = \{   k([1,13]) \
| \  k \in \
K \}$ = $ \{ [ \alpha(1), \alpha(1)\alpha(3) ] \
\ | \   \alpha \in H_2\} $= $\{  [1,1i] \ | \ 3 \leq i \leq n \}$. \

$O_3 =K([2,12]) = \{   k([2,12]) \
| \  k \in \
K \}$ = $ \{ [ \alpha(2), \alpha(1)\alpha(2) ] \
\ | \ \alpha \in H_2\} $= $[2,12]$. \

$O_4 =K([2,23]) = \{   k([2,23]) \
| \  k \in \
K \}$ = $ \{ [ \alpha(2), \alpha(2)\alpha(3) ] \
\ |   \alpha \in H_2\} $= $\{  [2,2i] \ | \ 3 \leq i \leq n \}$.\

$O_5 =K([3,13]) = \{   k([3,13]) \
| \  k \in \
K \}$ = $ \{ [ \alpha(3), \alpha(1)\alpha(3) ] \
\ | \ \alpha \in H_2\} $= $\{  [i,1i] \ | \ 3 \leq i \leq n \}$. \

$O_6 =K([3,23]) = \{   k([3,23]) \
| \  k \in \
K \}$ = $ \{ [ \alpha(3), \alpha(2)\alpha(3) ] \
\ | \ \alpha \in H_2\} $= $\{  [i,2i] \ | \ 3 \leq i \leq n \}$. \

$O_7 =K([3,34]) = \{   k([3,34]) \
| \  k \in \
K \}$ = $ \{ [ \alpha(3), \alpha(3)\alpha(4) ] \
\ | \ \alpha \in H_2\} $= $\{  [i,ij] \ | \ 3 \leq i,j \leq n, \ i \neq j \}$. \

\

If we let $ \pi = \{ O_1 , O_2, O_3,...,O_7 \} $ and $p_{ij} $ is the number of arcs from  $O_i$ to $O_j$, then  $ O_1 \cup O_2 \cup ... \cup O_7 = V= V(\Gamma) $, and  the following hold. \
\

\

$p_{12}= n-2, \ p_{13}=1, \ and\  p_{1j}=0, j\neq 2,3 $, because the vertex $[1,12] \in O_1$ is adjacent to all the $n-2$ vertices in $O_2$, and $1$ vertex in  $O_3$, namely,  $[2,12]$  and therefore $0$ vertex in  other orbits.\
\

\

 $p_{21}= 1, \ p_{22}=n-3, \  \ p_{25}=1  \ and \   p_{2,j}=0, j\neq 1,2,5 $, because the vertex $[1,13] \in O_2$ is adjacent to $1$ vertex in  $O_1$, say, $ [1,12]$,  and  $n-3$ other vertices in  $O_2$, and $1$ vertex in  $O_5$,  namely,  $[3,31]$,   and therefore $0$ vertex in  other orbits.\
\

\

$p_{31}=1, \  p_{34}=n-2,   \  and \  p_{3,j}=0, j\neq 1,4 $, because the vertex $[2,21] \in O_3$ is adjacent to $1$ vertex in  $O_1$, namely,  $[1,12]$,  and all of the $n-2$ vertices in  $O_4$, and therefore $0$ vertex in  other orbits.\

\

$p_{43}= 1, \  p_{44}=n-3, \  p_{46}=1  \ and \   p_{4j}=0, j\neq 3,4,6 $, because the vertex $[2,23] \in O_4$ is adjacent to $1$ vertex in  $O_3$, say, $ [2,21]$, and  $n-3$ other vertices in  $O_4$ and $1$ vertex in  $O_6$, namely,  $[3,32]$, and therefore $0$ vertex in  other orbits.\

\

$p_{52}= 1, \ p_{56}=1,  \  \ p_{57}=n-3 \  and \  p_{5j}=0, j\neq 2,6,7 $, because the vertex $[3,31] \in O_5$ is adjacent to $1$ vertex in  $O_2$, namely,  $[1,13]$,   and  $1$ vertex in  $O_6$, say,  $[3,32]$,  and $ n-3$ vertices in  $O_7$, namely,  every vertex in  $O_7$ of the form $[3,3j], 4 \leq j \leq n$,  and therefore $0$ vertex in  other orbits.\

\

$p_{64}= 1, \ p_{65}=1, \  p_{67}=n-3 \ and \   p_{6j}=0, j\neq 4,5,7  $, because the vertex $[3,23] \in O_6$ is adjacent to $1$ vertex in $O_4$, say,  $[2,23]$, and   $1$ vertex in  $O_5$, namely,  $[3,31]$, and    $ n-3$ vertices in  $O_7$, namely,  every vertex in  $O_7$ of the form $[3,3j], 4 \leq j \leq n$,  and therefore $0$ vertex in  other orbits.\

\

$p_{75}= 1, \ p_{76}=1, \ p_{77}=n-3,  \ and \   p_{7j}=0, j\neq 5,6,7  $, because the vertex $[3,34] \in O_7$ is adjacent to $1$ vertex in  $O_5$, say,  $[3,31]$,  and $1$ vertex in  $O_6$, namely $[3,32]$,   and  $ n-3$ vertices in  $O_7$, namely,  every vertex in  $O_7$ of the form $[3,3j], 4 \leq j \leq n$ and therefore $0$ vertex in  other orbits.\

\

Therefore,  the following matrix $ P$ is an adjacency matrix for the directed graph $ \Gamma/ \pi$. \\

\centerline{$P$=$\begin{bmatrix}
0 & n-2 & 1 & 0 & 0 & 0 & 0 \\
 1 & n-3 & 0 & 0 & 1 & 0 & 0 \\
 1 & 0 & 0 & n-2 & 0 & 0 & 0 \\
 0 & 0 & 1 & n-3 & 0 & 1 & 0 \\
 0 & 1 & 0 & 0 & 0 & 1 & n-3 \\
 0 & 0 & 0 & 1 & 1 & 0 & n-3 \\
 0 & 0 & 0 & 0 & 1 & 1 & n-3
\end{bmatrix}$}
\

\

We can use the Mathematica program for finding the eigenvalues of the matrix $P$. By the Mathematica program, we have;  \
\

\

\noindent\(\pmb{m=\{\{0,n-2,1,0,0,0,0\},\{1,n-3,0,0,1,0,0\},\{1,0,0,n-2,0,0,0\},}\\
\pmb{\{0,0,1,n-3,0,1,0\},\{0,1,0,0,0,1,n-3\},\{0,0,0,1,1,0,n-3\},}\\
\pmb{\{0,0,0,0,1,1,n-3\}\}}\\$

\noindent\(\pmb{\text{Eigenvalues}[m]}\)=$  \{ - 2, - 1, - 1, 0, - 2 + n, - 2 + n, - 1 + n \} $
\

\

Since by Theorem 2.4,  Theorem 2.5. and   Theorem 2.6.  the set of distinct  eigenvalues of the graph $L(n)$ is equal to the  set of distinct  eigenvalues of the  matrix $P$, hence $  \{ - 2, - 1, 0,n-2,n-1 \} $ is the set of all  distinct eigenvalues of the graph $ L(n) $. We now have the following result.
\begin{theorem}
Let $n>3$ be an integer. Then the graph $L(n)$ is vertex-transitive  integral graph with distinct eigenvalues $ -2, -1, 0, n-2, n-1$.
\end{theorem}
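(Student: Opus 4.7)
The plan is to exhibit a single orbit partition of $V(L(n))$ from which every eigenvalue of $L(n)$ can be read off, and then to compute the spectrum of the associated quotient matrix. Vertex transitivity comes for free from Proposition 3.1, so the substantive content is to identify the distinct eigenvalues.

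First I would take the subgroup $K \leq \mathrm{Aut}(L(n))$ consisting of the automorphisms $f_\alpha$ with $\alpha\in \mathrm{Sym}([n])$ fixing both $1$ and $2$, and consider its orbit partition $\pi=\{O_1,\dots,O_7\}$ on $V(L(n))$ as already computed in the paragraph preceding the theorem. The crucial feature is that $O_1=\{[1,12]\}$ and $O_3=\{[2,12]\}$ are singleton cells. By Proposition 2.7 the partition $\pi$ is equitable, so by Definition 2.3 the quotient $L(n)/\pi$ is a well-defined directed graph with adjacency matrix $P$ displayed above. Since $L(n)$ is vertex-transitive and $\pi$ has a singleton cell, Theorem 2.5 tells me that \emph{every} eigenvalue of $L(n)$ is an eigenvalue of $P$; and Theorem 2.4 gives the reverse inclusion at the level of distinct eigenvalues. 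Hence the distinct eigenvalues of $L(n)$ coincide with those of $P$.

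It therefore remains to show that the distinct eigenvalues of the $7\times 7$ matrix $P$ are exactly $-2,-1,0,n-2,n-1$. This is the main computational obstacle. Rather than invoking Mathematica, I would compute $\det(xI-P)$ directly. To keep the arithmetic tractable I would perform row and column operations that exploit the evident block structure: cells $O_1,O_2,O_3,O_4$ behave symmetrically under the swap $1\leftrightarrow 2$ (swapping $O_1\leftrightarrow O_3$, $O_2\leftrightarrow O_4$, and $O_5\leftrightarrow O_6$), which suggests introducing the sums and differences of the corresponding rows/columns to block-diagonalize $P$ into a ``symmetric'' and an ``antisymmetric'' part. The antisymmetric block of size $3$ should produce the eigenvalues $-1,-1,n-2$ (matching the orbit partition of the larger subgroup $H$ used in Proposition 3.3), and the symmetric block of size $4$ should factor into linear factors yielding $-2,0,n-2,n-1$. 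Collating these gives the characteristic polynomial $(x+2)(x+1)^2 x (x-(n-2))^2 (x-(n-1))$, so the multiset of eigenvalues is $\{-2,-1,-1,0,n-2,n-2,n-1\}$ and the distinct eigenvalues are $\{-2,-1,0,n-2,n-1\}$.

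Finally, I would observe that each of these numbers is actually attained by $L(n)$: the largest, $n-1$, is the valency by Proposition 3.1 and $(n-1)$-regularity; $-2$ is realized by Proposition 3.2; $-1$ and $n-2$ are realized by Proposition 3.3; and $0$ must also be an eigenvalue of $L(n)$ by Theorem 2.5 (since $0\in \mathrm{Spec}(P)$ and $\pi$ has a singleton cell). Combining this with the preceding containment yields the theorem. The only place where real care is needed is the determinant computation, and I expect the symmetry argument above to make it routine rather than brute-force.
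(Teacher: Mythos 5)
Your overall architecture is exactly the paper's: the same orbit partition $\pi=\{O_1,\dots,O_7\}$ of the stabilizer $K$ of $1$ and $2$, the same quotient matrix $P$, and the same appeal to Theorems 2.4 and 2.5 (via the singleton cell $O_1$) to identify the distinct eigenvalues of $L(n)$ with those of $P$. The only genuine departure is that you replace the paper's Mathematica computation of $\mathrm{Spec}(P)$ by a hand calculation exploiting the $1\leftrightarrow 2$ symmetry $(O_1\,O_3)(O_2\,O_4)(O_5\,O_6)$, which does commute with $P$, so the block-diagonalization into a $4$-dimensional symmetric and a $3$-dimensional antisymmetric part is legitimate and in principle gives a fully verifiable proof where the paper only offers a software output.

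However, the one step you flag as ``the main computational obstacle'' is asserted rather than carried out, and the assertion is wrong in its specifics. The antisymmetric block is
\[
A=\begin{bmatrix} -1 & n-2 & 0 \\ 1 & n-3 & 1 \\ 0 & 1 & -1\end{bmatrix},
\qquad \det(xI-A)=(x+1)(x+2)\bigl(x-(n-2)\bigr),
\]
so its eigenvalues are $-2,-1,n-2$, not $-1,-1,n-2$; a trace check already rules out your claim, since $\mathrm{tr}(A)=n-5$ while $-1-1+(n-2)=n-4$. Correspondingly the symmetric block
\[
S=\begin{bmatrix} 1 & n-2 & 0 & 0 \\ 1 & n-3 & 1 & 0 \\ 0 & 1 & 1 & n-3 \\ 0 & 0 & 2 & n-3\end{bmatrix}
\]
has characteristic polynomial $\bigl(y-(n-3)\bigr)\bigl(y-2(n-2)\bigr)$ with $y=(x-1)\bigl(x-(n-3)\bigr)$, which factors as $x\bigl(x-(n-2)\bigr)(x+1)\bigl(x-(n-1)\bigr)$; its eigenvalues are $-1,0,n-2,n-1$, not $-2,0,n-2,n-1$. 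The union of the two multisets is still $\{-2,-1,-1,0,n-2,n-2,n-1\}$, so your final conclusion agrees with the paper's, but as written the intermediate claims are false and the decisive computation is unproved. Once the two blocks are computed correctly as above, the argument closes; the concluding paragraph about which eigenvalues are ``attained'' is then redundant, since Theorems 2.4 and 2.5 already give equality of the distinct spectra.
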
 \

 \

\end{document}